\def\titlerunning#1{\gdef\titrun{#1}}
\def\author#1{\gdef\autrun{\def\and{\unskip, }#1}\gdef\@author{#1}}
\def\address#1{{\def\and{\\\hspace*{15.6pt}}\renewcommand{\thefootnote}{}\footnote{#1}}\markboth{\autrun}{\titrun}}
\def\email#1{email: \href{mailto:#1}{#1} }
\def\subjclass#1{\par\bigskip\noindent\textbf{Mathematics Subject Classification 2020.} #1}
\def\keywords#1{\par\smallskip\noindent\textbf{Keywords.} #1}
\newenvironment{acknowledgments}{\bigskip\small\noindent\textit{Acknowledgments.}}{\par}
\newtheorem{thm}{Theorem}[section]
\theoremstyle{definition}
\numberwithin{equation}{section}
\def\Im{\hbox{\rm Im\kern .8pt }}
\def\Re{\hbox{\rm Re\kern .8pt }}
\def\R{{\mathbb R}}
\def\C{{\mathbb C}}
\def\Cp{{\C_+}}\def\Cm{{\C_-}}
\def\D{{\Delta}}\def\Dm{{\Delta_-}}
\def\Rrp{\Re r_+}
\def\pO{\partial\Omega}
\def\pput(#1,#2)#3{\noindent\smash{\raise#2pt\hbox to 0pt {\kern #1pt #3\hss}}\ignorespaces}
\begin{document}

\titlerunning{AAA-least squares rational approximation}
\title{\textbf{AAA-least squares rational approximation and solution of Laplace problems}}
\author{Stefano Costa \and Lloyd N. Trefethen}
\date{}

\maketitle


\address{S. Costa: IEEE Member, Piacenza, Italy; \email{stefano.costa@ieee.org} \and 
L. N. Trefethen: Mathematical Institute, University of
Oxford, Oxford OX4 4DY, UK; \email{trefethen@maths.ox.ac.uk}}

\begin{abstract}
A two-step method for solving planar Laplace problems
via rational approximation is introduced.  First complex rational approximations
to the boundary data are determined by AAA approximation, either globally
or locally near each
corner or other singularity.  The poles of these approximations outside the
problem domain are then collected and used for a global least-squares fit
to the solution.  Typical problems are solved in a second of laptop time to
8-digit accuracy, all the way up to the corners, and the conjugate harmonic
function is also provided.  The AAA-least squares
combination also offers a new method for avoiding spurious poles in
other rational approximation problems, and for greatly speeding them up in cases
with many singularities.  As a special case, AAA-LS approximation
leads to a powerful method for computing
the Hilbert transform or Dirichlet-to-Neumann map.
\subjclass{Primary 41A20; Secondary 30E10, 44A15, 65N35}
\keywords{rational approximation, AAA algorithm, lightning Laplace solver,
conformal mapping, Hilbert transform, Dirichlet-to-Neumann map}
\end{abstract}

\section{\label{secintro}Introduction}
The aim of this paper is to introduce a new method for the
numerical solution of planar Laplace problems, based on
a combination of local complex rational approximations by
the AAA algorithm followed by a real linear least-squares
problem.  This method is an outgrowth of three previous
works~\cite{costa,lightning,AAA}, which we now briefly summarize.

The AAA algorithm (adaptive Antoulas--Anderson, pronounced
``triple-A'') is a fast and flexible method for
near-best complex rational approximation~\cite{AAA}.
Given a vector $Z$ of real or complex
sample points and a corresponding vector $F$ of data values, it finds
a rational function 
$r$ of specified degree or accuracy such that
\begin{equation}
r(Z)\approx F.
\label{approxprob}
\end{equation}
This is done by developing a barycentric representation for $r$
by alternating a nonlinear step of greedy selection of the
next barycentric support point with a linear least-squares
approximation step to determine barycentric weights.  If $F$
is obtained by a sampling a function $f(z)$ with singularities
at certain points of $Z$, such as logarithms and fractional
powers, then root-exponential convergence with respect to the
degree $n$ is typically achieved (i.e., errors $O(\exp(-C\sqrt
n\kern 1pt))$ for some $C>0$), with poles of the approximants $f$
clustering exponentially near the singularities~\cite{clustering}.
The standard implementation of AAA approximation is the code {\tt
aaa} in Chebfun~\cite{chebfun}.

The lightning Laplace solver is a method for solving Laplace problems
\begin{equation}
\Delta u = 0 \hbox{ on $\Omega$}, \quad
u = h(z) \hbox{ on $\pO$}
\label{laplaceprob}
\end{equation}
on a simply connected domain $\Omega$ in the plane, which
we parametrize for convenience by the complex variable
$z$~\cite{lightning}.  It also computes an analytic function
$f(z)$ such that $u = \Re f$.  This method first fixes poles with
exponential clustering near each corner of $\Omega$ or other point
where a singularity is expected.  A real linear least-squares
problem is then solved to determine a rational function in
$\Omega$ with the prescribed poles, plus a polynomial term (i.e.,
poles at infinity), whose real part matches the boundary data as
closely as possible.  The method converges root-exponentially with
respect to the number of poles and generalizes to Neumann boundary
data, multiply connected domains, and the Stokes and Helmholtz
equations~\cite{stokes,helmholtz}.  The standard implementation
is the MATLAB code {\tt laplace} available at~\cite{lightningcode}.

Although the lightning Laplace solver is fast and effective,
one would really like to solve Laplace problems by a method
more like the AAA algorithm, which allows the set $Z$ to be
completely arbitrary and adapts to the singularities of the
solution automatically rather than relying on a priori estimates of
pole clustering.  Two challenges have held back the development
of a AAA method for Laplace problems.  First, no barycentric
representation is known for real parts of rational functions.
Second, even if such a formula were available, there would remain
the fundamental problem of achieving approximation in a region
$\Omega$ based on values on the boundary $\pO$.  A AAA-style
approximation does not distinguish interior from exterior and
includes no mechanism to restrict poles to the latter.

These considerations led to the third contribution that this
work builds upon, published on arXiv by the first author
in 2020~\cite{costa}.  The upper row of Figure~\ref{fig1}
illustrates the idea as applied to the ``NA Digest model
problem''~\cite{digest}, an L-shaped region with boundary data
$u(z) = (\Re z)^2$.  First, complex AAA is used to approximate the
real data on the boundary.  The resulting analytic function is
complex (though real on $\pO$, up to the approximation accuracy),
with poles both inside and outside $\Omega$.  Then the poles in
$\Omega$ are discarded, leaving a set of poles outside $\Omega$
that are often clustered effectively for rational approximation.
The Laplace problem is solved by computing such an approximation
by linear least-squares fitting on $\pO$.

\begin{figure}[t]
\begin{center}
\vskip 8pt
\includegraphics[scale=.83]{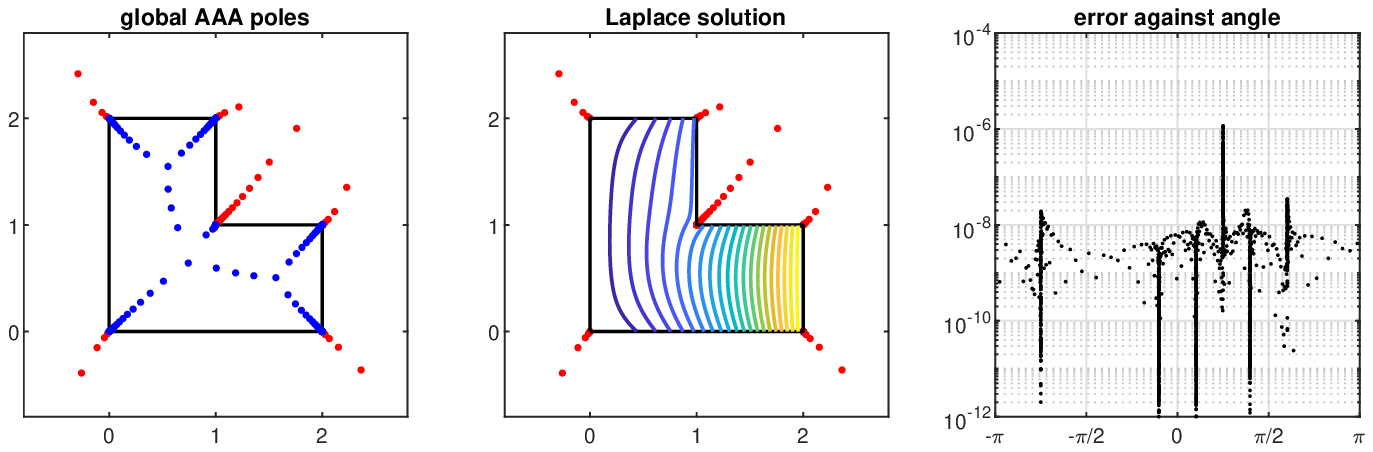}
\vskip 12pt
\includegraphics[scale=.83]{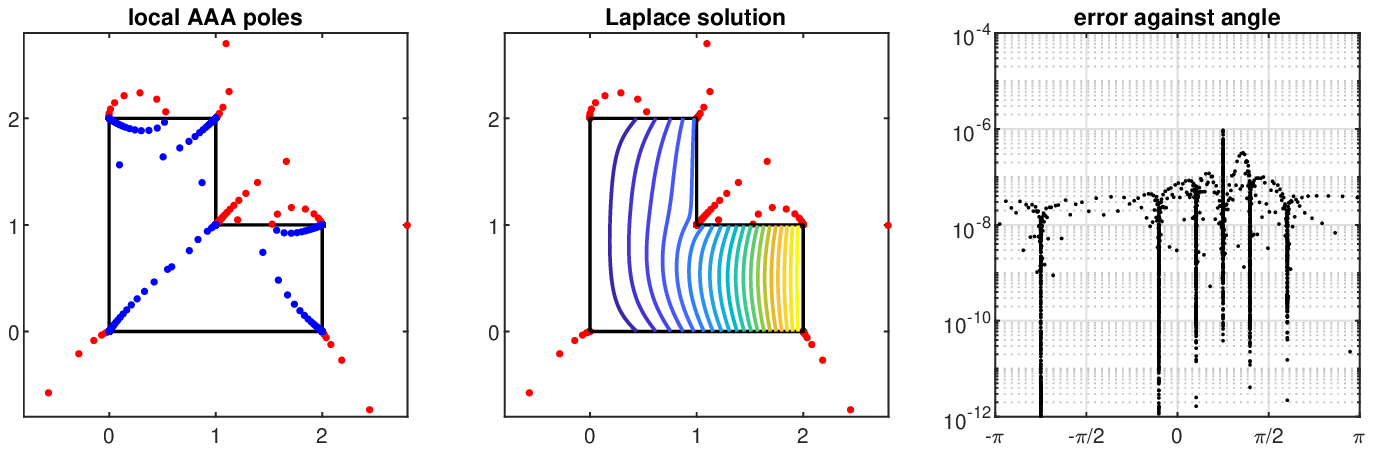}
\vspace{-16pt}
\end{center}
\caption{\small\label{fig1}Above, Costa's AAA-Laplace method from~{\rm\cite{costa}}.
A global AAA
approximation gives poles both inside and outside $\Omega$.  The poles inside
are discarded, and those outside are used for a linear least-squares fit.
Errors on the boundary in the
rightmost plot are plotted against angle with respect to the point $(1+i\kern .7pt )/2$.
This computation determines $u(0.99+0.99i)\approx 1.0267919261073$ to
10 digits of accuracy, but it takes 12 secs.\ of laptop time because
the AAA approximation has $294$ poles.
Below, the new local variant, in which the poles outside
$\Omega$ are determined by local AAA approximations
near each corner. 
The computation time falls to $0.67$ secs.\ because the
AAA problems are six times smaller, without much change
in accuracy.}
\end{figure}

In the form just described, the AAA-Laplace method can be quite
slow because of depending on AAA approximations with a large number
of poles.  In this article we propose a variation that often speeds
it up greatly, namely, to use local AAA approximations near each
singularity to choose the set of poles.  Since the cost of AAA
approximation grows with the fourth power of the number of poles,
this leads to a speedup potentially by a factor on the order of
the cube of the number of corners.  For the L-shaped example the
speedup is a factor of about $18$.

The AAA-Laplace method as presented in~\cite{costa} was actually
much slower than indicated in Figure~\ref{fig1} for an accidental
reason.  In that implementation, {\tt aaa} was invoked in its
default ``cleanup'' mode, which led to the removal of many poles
close to the singularities and a consequent need to compute AAA
approximations involving as many as $1000$ poles.  What that paper
interpreted as a halving of the number of digits of accuracy
due to discarding poles in $\Omega$ now seems to have been a
consequence of using the cleanup feature.  Throughout this paper,
we always call {\tt aaa} with ``cleanup off''.

\section{Laplace problems}
Our main interest is problems with corner singularities, since
this is where the power and convenience of rational functions are
most decisive.  However, the AAA approach can be effective for
smooth problems too.  Figure~\ref{randfig} presents an example.
An irregular domain $\Omega$ (bounded by a trigonometric
interpolant through 15 complex data points) is given with the
Laplace boundary condition $u(z)= -\log |z| $.  The vector $Z$
is constructed by sampling $\pO$ in $1000$ points, and a global
AAA fit to the boundary data with tolerance $10^{-8}$ yields $46$
poles in $\Omega$ and $30$ in $\C\backslash\overline{\Omega}$.
The interior poles are discarded, and a least-squares fit to the
boundary data is computed via a $1000\times 102$ matrix: 60 real
degrees of freedom for 30 poles and 42 for a polynomial term of
degree $20$.  The computation takes $0.7$ secs., and the maximum
error on $Z$ is $2.1\times 10^{-9}$.  A polynomial expansion
needs about 10 times as many degrees of freedom to achieve the
same accuracy, a ratio that would worsen exponentially for
more distorted regions according to the theory of the ``crowding
phenomenon'' in complex analysis~\cite[Thm.~5]{nm}.

\begin{figure}
\begin{center}
\vskip 8pt
\includegraphics[scale=.83]{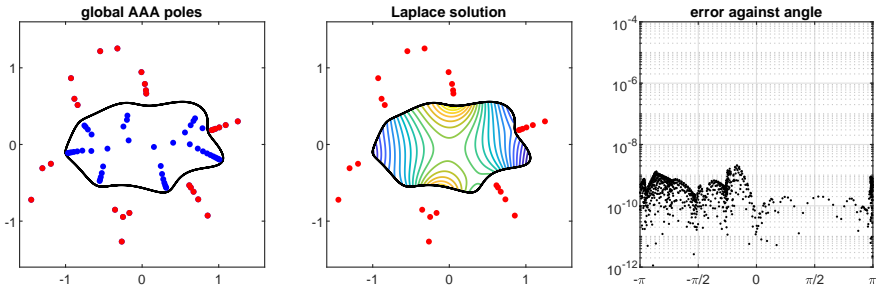}
\vspace{-16pt}
\end{center}
\caption{\small\label{randfig}A smooth Laplace problem solved by the global AAA-LS
method.  A global AAA approximation produces 46 poles inside $\Omega$ and
30 outside, and the latter are retained for a real least-squares problem that
also includes a polynomial term.
9-digit accuracy is achieved in $0.7$ secs.}
\end{figure}

We now turn to problems with singularities, typically at corners,
whose locations are assumed to be known in advance.   The local
variant of the AAA-LS algorithm proceeds in this manner:

{\em 1. Construct sample point vector $Z$ and fix corresponding
data values $H=h(Z)$.}

{\em 2. For each singularity, run AAA for nearby sample points
and data values.}

{\em 3. Discard poles in $\overline{\Omega}$ and retain poles
exterior to $\overline{\Omega}\kern .5pt$.}

{\em 4. Calculate real least-squares fit to boundary data,
including a polynomial term.}

{\em 5. Construct function handles for $u(z)$ and its analytic
extension $f(z)$.}

\noindent We give some mathematical and MATLAB details of each of these steps.
The global variant of the algorithm is the same except that step 2 involves
just a single global AAA approximant.
\smallskip

{\em 1. Construct sample point vector $Z$ and fix corresponding
data values $H=h(Z)$.} The problem domain $\Omega$ can be quite
arbitrary, and it can be multiply connected.  Typically $Z$
will consist of hundreds or thousands of points, which it is
simplest to specify in advance with exponential clustering
near singularities.  In MATLAB we use constructions like
\verb|logspace(-14,0,300)'| for a singularity at one endpoint of
$[\kern .5pt 0,1]$ and \verb|tanh(linspace(-16,16,600)')| for
singularities at both endpoints of $[-1,1]$.  If \hbox{AAA-LS}
software were to be developed analogous to the \verb|laplace| code
of~\cite{lightningcode} for the lightning Laplace method, then
it would be worthwhile placing sample points more strategically
to avoid having too many more rows in the matrix than necessary.

{\em 2. For each singularity, run AAA for nearby sample points
and data values.} We use the simplest choice: each point of $Z$
is associated with whichever singularity it is closest to (on the
same boundary component, if the geometry is multiply connected
so there are several boundary components).  The Chebfun command
{\tt aaa} is invoked with \verb|'cleanup','off'|, and throughout
this paper we specify a AAA tolerance of $10^{-8}$.

{\em 3. Discard poles in $\overline{\Omega}$ and retain poles exterior to
$\overline{\Omega}\kern .5pt$.}
The {\tt aaa} code returns highly accurate pole locations computed via
a matrix generalized eigenvalue problem described in~\cite{AAA}.
To distinguish those inside and outside $\Omega$, we use the complex variant 
\verb|inpolygonc = @(z,w) inpolygon(real(z),imag(z),real(w),imag(w))|
of the {\tt inpolygon} \kern .8pt command.

{\em 4. Calculate real least-squares fit to boundary data, including a polynomial term.}
If {\tt pol} is a row vector of the poles from 
step 3 and {\tt n} is a small nonnegative integer, the sequence
{\small
\begin{verbatim}
    d = min(abs(Z-pol),[],1);
    P = Z.^(0:n); Q = d./(Z-pol);
    A = [real(P) real(Q) -imag(P) -imag(Q)];
    c = reshape(A\H,[],2)*[1;1i];
\end{verbatim}
\par}
\noindent
computes a complex coefficient vector $c$ for the function $f$ in the space spanned
by the polynomials of degree $n$ and the given poles such that $u = \Re f$ is the
least-squares fit to the data $H$ in the sample points.
The vector $d$ contains the distances of the poles to $Z$ and is used to
scale the columns of $Q$ to have $\infty$-norm 1.\ \ For
$n$ much larger than 10, however, numerical stability requires that the
monomials of \verb|Z.^(0:n)| be replaced by orthogonalizations
computed by the Vandermonde with Arnoldi procedure of~\cite{VA}.
This can be done by replacing \verb|P = Z.^(0:n)| by 
\verb|[Hes,P] = VAorthog(Z,n)|,
where the code {\tt VAorthog} comes from~\cite{stokes}
and is listed in the appendix.

The description and code above apply for bounded, simply connected
domains with Dirichlet boundary conditions.  For problems with
Neumann boundary conditions on some sides, the corresponding rows
of $A$ are modified appropriately.  For exterior domains, $z$
is replaced by $(z-z_c)^{-1}$ for some point $z_c$ in the hole.
For multiply-connected domains, additional columns of the form
$\log|z-z_j|$ must be added where $\{z_j\}$ are a set of a fixed
points, one in each hole~\cite{axler,series}.  In addition, new
columns are added corresponding to polynomials in $1/(z-z_j)$
for each $j$.

{\em 5. Construct function handles for $u(z)$ and its analytic extension $f(z)$.}
For convenience in making plots and other applications, it is desirable
to have functions that can be applied to matrices as
well as vectors.  Following the commands above this can be achieved with
\par

\noindent{\small
\begin{verbatim}
    f = @(z) reshape([z(:).^(0:n) d./(z(:)-pol)]*c,size(z));
    u = @(z) real(f(z)); v = @(z) imag(f(z));
\end{verbatim}
\par}
\noindent
When {\tt VAorthog} is used, the first line is replaced by
{\small
\begin{verbatim}
    f = @(z) reshape([VAeval(z(:),Hes) d./(z(:)-pol)]*c,size(z));
\end{verbatim}
\par}

\begin{figure}[t]
\begin{center}
\vskip 8pt
\includegraphics[scale=.83]{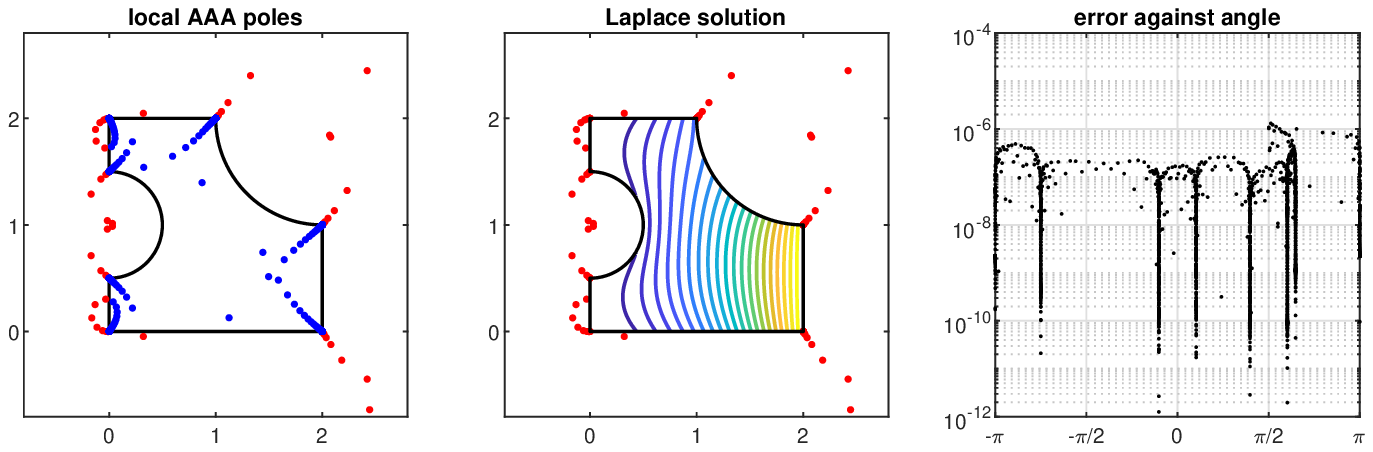}
\vskip 12pt
\includegraphics[scale=.83]{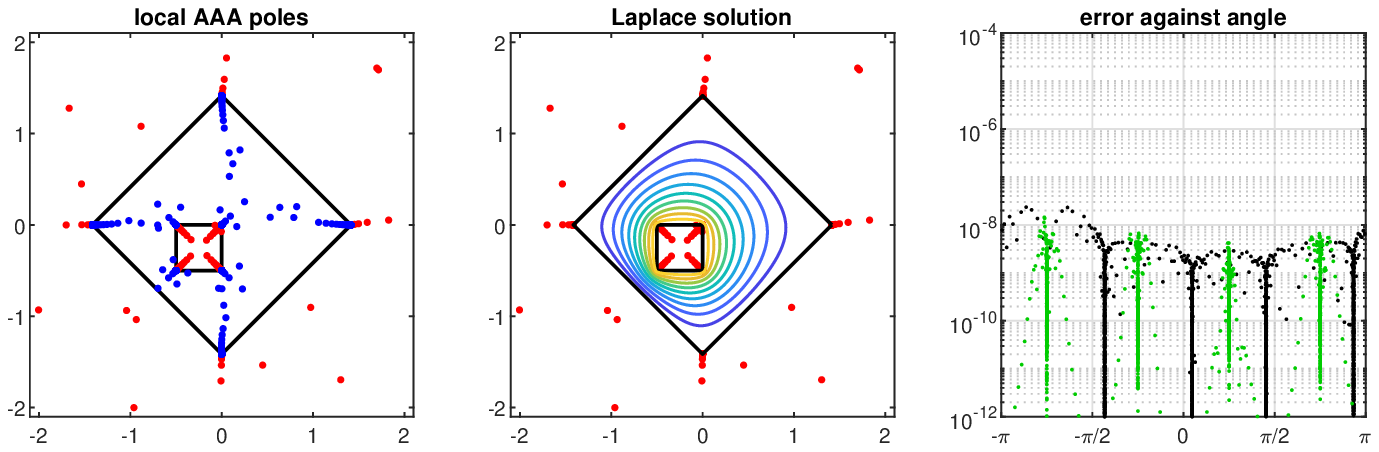}
\vspace{-16pt}
\end{center}
\caption{\small\label{circleL}Two examples of Laplace solutions by the 
local AAA-LS method.  Above, a square with two circular bites
removed.  The computation involves 102 poles outside the domain
and a polynomial of degree $20$.  Below, a multiply connected
domain, solved in 1.7 secs.\ with 397 poles outside the domain
and a polynomial of degree 40.  In the error plot, black dots
correspond to the outer boundary and green dots to the inner one.
The boundary data used for local-AAA pole location are not those
of the Laplace problem, as explained in the text.} \end{figure}

Figure~\ref{circleL} illustrates the method at work on two
examples.  In the first row, the L shape of Figure~\ref{fig1}
has been modified to a square with two circular bites removed.
No new issues arise here, as the method does not distinguish
between straight and curved sides, so long as they are smooth.
The second row shows a doubly-connected problem, and here some
new issues do arise.  First there is the use of polynomials
with respect to both $z$ and $(z-z_c)^{-1}$ as described above
(writing $z_c$ instead of $z_1$ since there is just one hole), as
well as the introduction of a $\log|z-z_c|$ term; we take $z_c =
-(1+i\kern .5pt )/4$.  The domain is discretized by 400 clustered
points on each of the eight side segments, and the polynomials
in $z$ and $(z-z_c)^{-1}$ are of degree 40.  A more fundamental
issue also arises in this problem.  The boundary data have been
taken as $1$ on the inner square and $0$ on the outer square,
a natural situation for a heat flow or electrostatics problem in
a doubly connected geometry.  Since these boundary conditions
are constant on each of the two boundary components, however,
the local AAA problems will be trivial and no poles at all will
be produced!  Clearly that is no route to an accurate solution,
so for this computation, poles have been generated by using an
artificial boundary condition (the square root of the product of
the distances to the eight corners) and then the least-squares
problem is solved with the boundary data actually prescribed.
The reader is justified if he/she finds this puzzling, and we
discuss the matter further in Section~\ref{theory}.

\begin{figure}
\begin{center}
\vskip 8pt
\includegraphics[scale=.75]{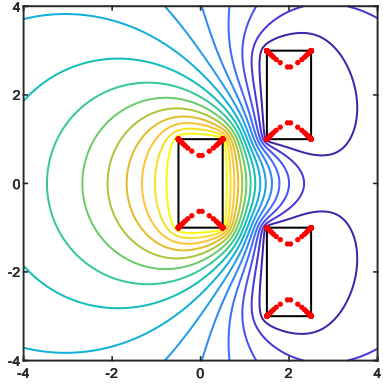}
\vspace{-16pt}
\end{center}
\caption{\small\label{three}Local AAA-LS
solution of a Laplace problem in an unbounded triply-connected domain,
requiring reciprocal polynomials with respect to three interior points
$c_j$ and also logarithm terms $\log |z-z_j|$.  The computation takes 2 secs.\ and
gives the value $u(1) \approx 0.64357510429036$ to 10-digit accuracy.}
\end{figure}

Our final example of this section is an unbounded region with three
rectangular holes, shown in Figure~\ref{three}.  The boundary
conditions are $u=1$ on the rectangle at the left and $u=0$ on
the other two, giving a natural interpretation as the potential
around three conductors.  Each boundary segment is discretized by
400 clustered points, so the least-squares matrix has 4800 rows.
The AAA fits lead to 52 poles inside a rectangle near each corner,
624 in total, and we also have a reciprocal-polynomial of degree
10 and one real logarithm term in each rectangle, bringing the
number of columns of the matrix to $2 \times (624+3\times 11) + 3
= 1317$.  A solution is computed in 2 secs.\ to 10-digit accuracy
as measured by the value at the point $z=1$ midway between the
rectangles, $u(1)\approx 0.64357510429036$.

A fine point to note in this triply-connected example is that the
point $z=\infty$ is a point of analyticity, in the interior of the
domain, so there should be no logarithmic term there, meaning that
the sum of the coefficients of the three log terms centered at
the points $z_1, z_2, z_3$ in the rectangles should be zero.  This
condition can be enforced by adding one more row to the matrix, or
(as was in fact done for the computation in the figure) by taking
the log columns of the matrix to correspond not to $\log|z-z_j|$
but to $\log|z-z_j| - \log|z-z_{j (\hbox{\scriptsize mod } 3)+1}|$.

\section{Conformal mapping}
A Laplace solver that also produces the harmonic conjugate of the solution,
hence its analytic extension, can
be used to compute conformal maps.  Details are given in~\cite{conf},
so here we give just one example of construction of the conformal map $g$ of
a simply-connected region $\Omega$ containing the point $z=0$ to the unit disk, with
$g(0) = 0$ and $g'(0)>0$.  The trick is to write $g$ in the form
\begin{equation}
g(z) = z\exp(f(z)), \quad f(z) = \log(g(z)/z),
\label{confmap}
\end{equation}
where $f$ is the unique nonzero analytic function
on $\Omega$ has real part $-\log |z|$
on $\pO$ and imaginary part $0$ at $z=0$.
Thus $f$ is obtained by solving a Laplace Dirichlet problem, and
(\ref{confmap}) then gives the map $g$.

\begin{figure}
\begin{center}
\vskip 8pt
\includegraphics[scale=.86]{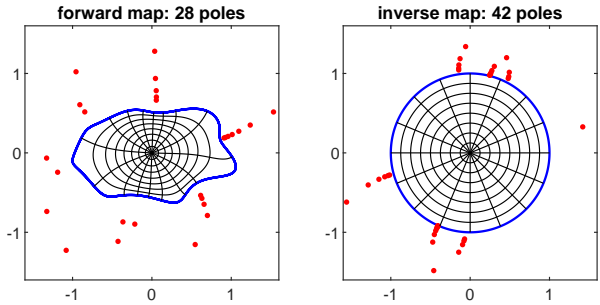}
\vspace{-16pt}
\end{center}
\caption{\small\label{confmapfig}Conformal map of the
region of Figure~\ref{randfig} by the global 
AAA-LS method.  The map is computed to 8-digit accuracy
in $0.8$ secs.\ and
the rational approximations in both directions are
evaluable in less than 1 $\mu$sec per point.  In the left image, the
poles differ slightly from those of Figure~\ref{randfig} because
a further AAA compression of $z\exp(f(z))$ has taken place.}
\end{figure}

Figure~\ref{confmapfig} illustrates this method for the smooth
region of Figure~\ref{randfig}, where $-\log|z|$ was already
the boundary condition.  Thus the conformal map comes from
exponentiating the analytic extension of the harmonic function
of Figure~\ref{randfig} and multiplying the result by $z$.
As described in~\cite{nm}, this result is then compressed by AAA
approximation, and another AAA approximation gives the inverse map.
See~\cite{conf} for extensions to multiply connected regions.

The speed of these computations is remarkable.  After an initial
0.9 secs.\ to construct the forward and inverse maps in this
example, they can be each then be evaluated in 0.3 $\mu$secs.\
per point.    For example, we take one million random points
uniformly distributed in the unit disk, map them conformally to
$\Omega$, then map these images back to the unit disk again.
The whole back-and-forth process takes $0.6$ secs., and the
maximum error in the million sample points is $1.1\times 10^{-8}$.

\section{Rational approximation without spurious poles}
Though the emphasis in this paper is on Laplace problems, AAA-LS
approximation also offers striking advantages for more general
rational approximations.  It may be much faster than AAA alone for
problems with a number of singularities, and since unwanted poles
can be discarded, it produces approximations guaranteed to have
desired properties of analyticity and stability.  Thus AAA-LS may
combat what Heather Wilber has called the ``spurious poles blues''
(discussed in~\cite{wilber}, though without this phrase).

\begin{figure}[t]
~~~~~\includegraphics[scale=.8]{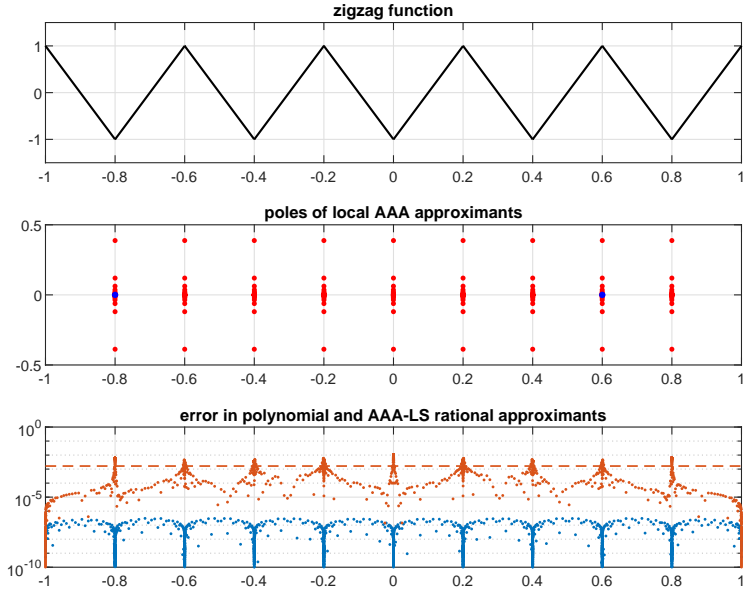}
\caption{\small\label{zigzag}Top, a real zigzag function on $[-1,1]$ to be approximated
over the whole interval by a single rational function.
Middle, the 466 poles determined by local AAA fits near each singularity, each of
degree 51 or 52.  Two poles lie in $[-1,1]$ and are discarded (blue).
Bottom, the resulting
errors in the AAA-LS fit show accuracy of $3\times 10^{-7}$.  
A polynomial with the 
same 962 degrees of freedom such as a Chebyshev interpolant (dots)
could have accuracy at best $10^{-3}$ (dashed line).}
\end{figure}

We illustrate both the speed and the robustness with an
example of approximating a real zigzag function on the
interval $[-1,1]$, shown in Figure~\ref{zigzag}.\ \ Knowing
that poles will need to cluster exponentially at the points
$-0.8, -0.6,\dots,0.8$, we set up a 3000-point grid consisting
of {\tt -0.9 + 0.2*tanh(linspace(-16,16,300))} and its nine
translates at centers $-0.7, -0.5, \dots, 0.9$.\ \ With straight
AAA approximation, poles in $[-1,1]$ virtually always appear.
They could be removed for input to a least-squares fit, but the
timing would still be very slow for the moderately large degrees
needed for effective approximation: $0.3$, $4.2$, and $35.3$
seconds on our laptop for degrees $50$, $200$, and $500$.\ \
By contrast, with its local AAA fits the AAA-LS method quickly
computes a good approximation.  In the figure, AAA-LS has been
run with AAA tolerance $10^{-8}$, leading to local fits each
of size $51$ and $52$ and hence quite speedy.  This gives $466$
poles all together, two of which lie in $[-1,1]$ and are discarded,
as shown in the middle panel of the figure.  A least-squares fit
with these 464 poles, plus a polynomial of degree 16, then gives
the error marked in blue in the bottom figure, with maximum error
$3.1\times 10^{-7}$.  The whole computation takes half a second,
and the resulting approximation can be evaluated in 5 $\mu$secs.\
per point.  By contrast a polynomial fit with the same 962 degrees
of freedom can have error no smaller than $1.6\times 10^{-3}$,
as marked by the red dashed line.  The red dots show the error
for a polynomial Chebyshev interpolant of that degree.

It appears that AAA-LS offers a flexible, fast, and reliable way to
compute near-best rational approximations with no unwanted poles.
Potential applications lie in many areas of computational science
and engineering.  An interesting question is, might AAA-LS be
further leveraged via a AAA-Lawson iteration as in~\cite{lawson}
to lead to truly minimax rational approximations in certain cases?\
\ For this to be possible, it would be necessary first to convert
the rational approximation to barycentric form.  We have not
explored this possibility.

\section{\label{hilbert}Computing the Hilbert transform}

If $u$ is a sufficiently smooth real function defined on the real line, its
Hilbert transform is the function $v$ defined by the principal value
integral
\begin{equation}
v(y) = {1\over \pi} \hbox{\kern 2pt PV\kern -2pt }
\int_{-\infty}^\infty {u(x)\over y-x} \kern .8pt dx.
\end{equation}
The transform can be interpreted as follows:
if $f$ is a complex analytic function in the upper half-plane with
$\Re f(x) = u(x)$ for $x\in\R$, then
$v(y) = \Im f(y)$.  Similar definitions and interpretations apply
to the unit circle and other contours.  Another name for the Hilbert
transform (essentially) is the Dirichlet-to-Neumann map.

It is evident that to compute the Hilbert transform numerically,
it suffices to find an analytic function in the upper half-plane
whose real part on $\R$ matches that of $u$ to sufficient accuracy.
The classical idea of this kind is to use a Fourier transform,
perhaps discretized on a finite interval by the Fast Fourier
Transform~\cite[p.~203]{henrici}.  For example, this is the method
used by the {\tt hilbert} command in the MATLAB Signal Processing
Toolbox.  But it is also possible to use rational approximations
instead of trigonometric polynomials, and numerical methods of
this kind have been proposed~\cite{mqmc,protasov,weideman}.

The AAA-LS method provides another natural approach based on
rational approximation, since poles in the upper half-plane can
be discarded to ensure the appropriate analyticity.  Indeed,
all of our AAA-LS Laplace solutions can be regarded as Hilbert
transforms, but on more general contours $\pO$.  A prototype code
for the real line can be written like this:

{\footnotesize
\verbatiminput{ht.m}
\par}
\noindent This is not an item of software---it is a proof
of concept.  Note that the sampling grid has been taken as 300
points exponentially spaced from $10^{-10}$ to $10^{10}$ and their
negatives, 600 points all together.   This would not be appropriate
for all functions, but it is a good starting point for a function
which loses analyticity possibly at $0$ and at $\infty$.  The code
does well at computing Hilbert transforms of the seven example
functions Weideman lists in Table~1 of his paper~\cite{weideman}.
In 0.6 secs.\ total on a laptop it produces results for these
seven example problems with relative accuracy in the range of
5--13 digits, as detailed in Table~\ref{thetable}.  We shall
not attempt systematic comparisons with other algorithms, but
as an indication of the nontriviality of these computations, we
mention that applying the MATLAB {\tt hilbert} command for $u(x) =
\exp(-x^2)$ on a grid of 1024 equispaced points in $[-20,20\kern
.5pt]$ gives an estimate of $v(2)$ with an error of $1.4\times
10^{-2}$, 11 orders of magnitude greater than the figure in
Table~\ref{thetable}.

\begin{table}[h]
\caption{\small\label{thetable}The example functions $u(x)$ from Table 1 by
Weideman~\cite{weideman} together with their Hilbert transforms $v(x)$
evaluated at the arbitrary point $x=2$.  In a total time of $0.6$ secs., the prototype
AAA-LS code {\tt ht} computes these numbers to 5--13 digits of relative
accuracy.}
\begin{center}
\def\ee{\hbox{\kern 1pt e}{-}}\def\em{\hphantom{-}}
\begin{tabular}{ccl}
Function $u$ & ~~Hilbert transform $v(2)$ & AAA-LS error \\ 
\hline \\[-6pt]
$1/(1+x^2)$       & ~~$0.400000000000000$ & ~~$-1.3\ee 12$ \\[2pt]
$1/(1+x^4)$       & ~~$0.415945165403851$ & ~~$-4.3\ee 14$ \\[2pt]
$\sin(x)/(1+x^2)$ & ~~$0.156805255543717$ & ~~$\em 3.4\ee 06$ \\[2pt]
$\sin(x)/(1+x^4)$ & ~~$0.121897775700258$ & ~~$-1.7\ee 07$ \\[2pt]
$\exp(-x^2)$      & ~~$0.340026217066066$ & ~~$\em 1.0\ee 13$ \\[2pt]
$\hbox{sech}(x)$  & ~~$0.506584586167368$ & ~~$\em 1.3\ee 10$ \\[2pt]
$\exp(-|x|)$      & ~~$0.328435745958114$ & ~~$-1.4\ee 12$ 
\end{tabular}
\end{center}
\end{table}

Figure~\ref{htplot} illustrates AAA-LS computation of the Hilbert transform
graphically for Weideman's final example,
\begin{equation}
u(x) = e^{-|x|}, \quad v(y) = \pi^{-1} \hbox{sign}(y) \bigl[ e^{|y|} E_1(|y|) +
e^{-|y|} Ei(|y|)\bigr],
\end{equation}
where $E_1$ and $Ei$ are the exponential integrals computed in
MATLAB by {\tt expint} and {\tt ei}.  For each of the values $L =
1,2,\dots, 6$, a sample grid of $60L$ points $y$ has been used
consisting of $30L$ points exponentially spaced from $10^{-L}$
to $10^{\kern 1pt L}$ and their negatives.  Rapid convergence is
observed to an accuracy of better than $10$ digits, despite
the singularity of $u$ at $x=0$.

\begin{figure}
\begin{center}
\vspace{8pt}
\includegraphics[scale=.83]{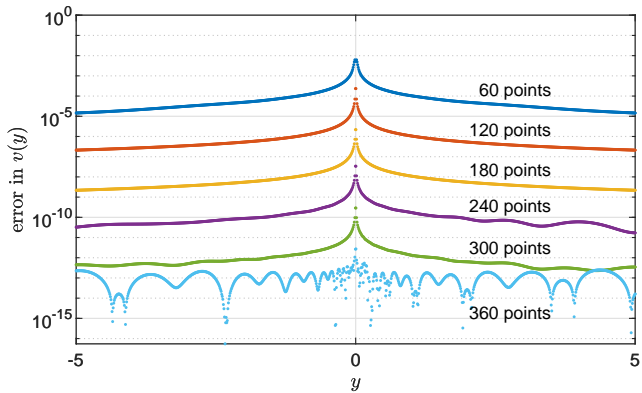}
\vspace{-12pt}
\end{center}
\caption{\label{htplot}Error at $1000$ points $y\in[-5,5]$
in the Hilbert transform
of $u(x) = \exp(-|x|)$ computed by the global AAA-LS method
from $60, 120, \dots , 360$ exponentially spaced samples.
This plot was produced in 2 secs.\ on a laptop.}
\end{figure}

The great flexibility of the AAA-LS method for computing the
Hilbert transform is to be noted.  It can work with arbitrary
data points, which need not be regularly spaced, and it delivers
a result as a global representation speedily evaluated via a
function handle.  No interpolation of data is required (see
discussion of this problem in~\cite{coscos}), and singularities
in $u(x)$ cause little degradation of accuracy so long as there
are sample points clustered nearby, as illustrated in the example
of Figure~\ref{htplot}.

Many generalizations of this AAA-LS Hilbert transform computation
are possible, including other contours both open and closed and
more general Riemann--Hilbert problems.

\section{\label{theory}Theoretical observations}

The core of the AAA-LS method (in its global form) is the following
idea, which we shall call the {\em pole symmetry principle}.
Suppose $r$ is a complex rational approximation that closely
approximates a real function $h$ on the boundary $\pO$ of a region
$\Omega$.  Then there is another complex rational function $r_+$,
{\em with poles only at the locations of the poles of $r$ outside
$\Omega$}, such that $\Re r_+$ also closely approximates $h$
on $\pO$.  The AAA-LS method finds $r$ by AAA approximation on
$\pO$, extracts its poles outside $\Omega$, and then finds $r_+$
by linear least-squares fitting on $\pO$.

In particular, for cases with singularities on $\pO$, rational
functions $r$ exist with root-exponential convergence to $h$ as
$n\to\infty$~\cite{lightning}.  Such approximations will usually
have poles that cluster exponentially on both sides of $\pO$
near each singularity.  The pole symmetry principle proposes
that we can discard all the poles inside $\Omega$, retaining
only the ones outside $\Omega$, and still get essentially the
same root-exponential convergence.

In this section we assess this idea.  Our conclusions can be
summarized as follows:

\begin{enumerate}
\item
If $\Omega$ is a half-plane or a disk, the pole symmetry principle holds exactly
(Theorems~\ref{thm1} and~\ref{thm2}).
\item
If $\Omega$ is a simply-connected domain with corners,
the pole symmetry principle fails in the worse case in that
$r_+$ may have no poles near $\pO$ even though they are needed
to resolve singularities; conversely it may have clusters
of poles near $\pO$ when they are not needed (examples shown
in Figure~\ref{lensfig}).\ \ However, both of these
situations are nongeneric.  For most problems, the principle
holds also on regions with corners.
\item 
If $\Omega$ is a simply-connected domain
bounded by an analytic curve, then in a certain
theoretical sense it can be
reduced to the case of a disk.  However, the
constants involved may be sufficiently adverse that in practice,
it may be more appropriate to think of $\Omega$ as 
a domain with corners.  Again the pole symmetry principle will usually hold
even if this cannot be guaranteed in the worst case.
\item
If $\Omega$ is a multiply-connected domain, then harmonic
functions in $\Omega$ can in general not be approximated by
rational functions: logarithmic terms are needed too.  Thus the
pole symmetry principle is inapplicable and a local rather than global
variant of AAA-LS should be used.
\end{enumerate}

To establish conclusion (1), let $\Cm$ and $\Cp$ denote the
open lower and upper complex half-planes, respectively, and let
$\|\cdot\|_E$ denote the supremum norm over a set $E$.  The two
assertions of the following theorem ensure that complex rational
approximation on $\R$ produces ``enough poles'' to solve the
Laplace problem on $\Cp$, and that it does not produce ``too many
poles'' to be efficient.

\begin{thm}
\label{thm1}
Given a bounded real continuous function $h$ on $\R$, let $u$ be the bounded
harmonic function in $\Cp$ with $u(x) =h(x)$ for $x\in\R$.  Suppose
there exists a rational
function $r$, also real on $\R$, such that $\|r-h\|_\R\le \varepsilon$
for some $\varepsilon \ge 0$.
Then there exists a rational function $r_+$ whose poles are
precisely the poles of\/ $r$ in $\Cm$ such that
$\|\Rrp - h\|_\R \le \varepsilon$ and thus by the maximum principle also
$\|\Rrp - u\|_\Cp\le \varepsilon$.  Conversely, if\/ $r_+$ is a rational function
analytic in $\Cp$ such that $\|\Rrp-u\|_\Cp\le \varepsilon$, then 
there exists a rational function $r$ whose poles are
the poles of\/ $r_+$ and their reflections in $\Cp$ such that
$\|r-h\|_\R\le \varepsilon$.  
\end{thm}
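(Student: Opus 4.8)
The plan is to prove both implications with a single device, a reflection (``pole symmetry'') argument that makes the principle exact on a half-plane. A complex rational function $r$ that is real on $\R$ satisfies the meromorphic identity $r(z)=\overline{r(\bar z)}$, so its partial-fraction expansion splits as $r(z)=p(z)+g_-(z)+\overline{g_-(\bar z)}$, where $p$ is the polynomial part (with real coefficients), $g_-$ collects the principal parts at the poles of $r$ lying in $\Cm$, and the last term carries the reflected principal parts at the poles in $\Cp$; restricted to $\R$ this reads $r=p+2\Re g_-$. In the other direction, $\tfrac12\bigl(s(z)+\overline{s(\bar z)}\bigr)$ manufactures a real-on-$\R$ rational function out of a one-sided piece $s$, replacing the pole set of $s$ by that set together with its mirror image across $\R$.

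For the forward implication, I would first observe that $\|r-h\|_\R\le\varepsilon$ together with boundedness of $h$ forces $r$ to be bounded on $\R$; hence $r$ has no poles on $\R$ and $p$ is a real constant $c$. Setting $r_+:=c+2g_-$ produces a rational function whose poles are precisely the poles of $r$ in $\Cm$, analytic on $\overline{\Cp}$, with $\Re r_+=r$ on $\R$, so $\|\Re r_+-h\|_\R=\|r-h\|_\R\le\varepsilon$. Since $g_-$ is a proper rational function with poles in $\Cm$, it is continuous and bounded on $\overline{\Cp}$ and vanishes at $\infty$; thus $\Re r_+$ is a bounded harmonic function on $\Cp$, continuous up to $\R$, and so is $u$, which (being the \emph{bounded} harmonic extension of the bounded continuous datum $h$) is the Poisson integral of $h$ and is continuous on $\overline{\Cp}$. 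Then $w:=\Re r_+-u$ is bounded and harmonic on $\Cp$, continuous on $\overline{\Cp}$, with $|w|\le\varepsilon$ on $\R$, and the maximum principle for bounded harmonic functions on a half-plane (a Phragm\'en--Lindel\"of statement) gives $\|\Re r_+-u\|_{\Cp}\le\varepsilon$.

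For the converse, suppose $r_+$ is rational, analytic in $\Cp$, and $\|\Re r_+-u\|_{\Cp}\le\varepsilon$ with $u$ bounded. A pole of $r_+$ on $\R$ is ruled out by the hypothesis, since the real part of a rational function is unbounded in every half-plane neighbourhood of a real pole (inspect the highest-order term of the principal part along rays into $\Cp$), which would contradict $\|\Re r_+-u\|_{\Cp}\le\varepsilon$. So every pole $b_j$ of $r_+$ lies in $\Cm$, and I would set $r(z):=\tfrac12\bigl(r_+(z)+\overline{r_+(\bar z)}\bigr)$. This $r$ is rational, real on $\R$, and equal to $\Re r_+$ there; its pole set is $\{b_j\}\cup\{\bar b_j\}$, the poles of $r_+$ together with their reflections in $\Cp$, with no cancellation since the two summands have poles in disjoint half-planes. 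Finally $\Re r_+$ is continuous on $\overline{\Cp}$, so $\|\Re r_+-u\|_\R\le\|\Re r_+-u\|_{\Cp}\le\varepsilon$, and since $r=\Re r_+$ on $\R$ this reads $\|r-h\|_\R\le\varepsilon$.

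The only step I expect to require genuine care is the appeal to the maximum principle on the unbounded domain $\Cp$: one must use boundedness of both $\Re r_+$ and $u$ to control $\sup_{\Cp}|w|$ by $\sup_{\R}|w|$ --- this fails without a growth restriction, as $\Im z$ shows --- and one must record that the bounded harmonic extension $u$ of $h$ is given by the Poisson integral and is continuous on $\overline{\Cp}$ with boundary values $h$. The remaining ingredients --- the partial-fraction bookkeeping, the reality of $r$ on $\R$, the exact identification of the pole sets, and the harmless factor $\tfrac12$ --- are routine.
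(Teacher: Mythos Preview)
Your proof is correct and follows essentially the same reflection/partial-fraction decomposition as the paper's: split $r$ into pieces with poles in $\Cm$ and $\Cp$ (with the constant shared), identify $\Re r_+ = r$ on $\R$, and for the converse take $r(z)=\tfrac12\bigl(r_+(z)+\overline{r_+(\bar z)}\bigr)$. You are more explicit than the paper about why the polynomial part must be constant, why no poles can lie on $\R$, and why the maximum principle needs a Phragm\'en--Lindel\"of boundedness hypothesis on $\Cp$; these are all welcome clarifications rather than a different route.
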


\begin{proof}
Given $r$ as indicated in the first assertion,
write $r(z) = (r_+(z) + r_-(z))/2$, where $r_+$
has its poles in $\Cm$ and $r_-$
has its poles in $\Cp$.  By the Schwarz reflection principle,
$r(\overline z ) = \overline{r(z)}$ for all $z\in \C$,
and thus the poles of $r_-$ must be the conjugates of
the poles of $r_+$.  Symmetry further implies
\begin{equation}
r_-(z) = \overline{r_+(\overline{z})} ~~\forall z\in\C, ~\quad
r(x) = \Rrp(x) ~~\forall x\in\R,
\end{equation}
assuming that the constant $r(\infty)$, if it is nonzero, is
split equally between $r_-$ and $r_+$.  Thus
$\Re r_+(z)$ is a bounded harmonic function in $\Cp$
with $\|\Rrp -h\|_\R\le \varepsilon$,
hence also $\|\Rrp-u\|_\Cp \le \varepsilon$ 
by the maximum principle.  Moreover, the poles of $r_+$
are exactly the poles of $r$ in $\Cm$.
Conversely, given $r_+$ as indicated in the second assertion, the function
$r(z) = (r_+(z) + \overline{r_+(\overline z)})/2$ has the required properties.
\end{proof}

The other half of conclusion (1) concerns the case of the open
unit disk $\D$.  Let $S$ denote the unit circle and $\Dm$ the
complement of $\overline{\D}$ in $\C\cup\{\infty\}$.  We get
essentially the same theorem as before.

\begin{thm}
\label{thm2}
Given a real continuous function $h$ on $S$, let $u$ be the
harmonic function in $\D$ with $u(x) =h(x)$ for $x\in S$.  Suppose
there exists a rational
function $r$, also real on $S$, such that $\|r-h\|_S\le \varepsilon$ for
some $\varepsilon \ge 0$.
Then there exists a rational function $r_+$ whose poles are
precisely the poles of\/ $r$ in $\Dm$ such that
$\|\Rrp - h\|_S\le \varepsilon$ and thus also
$\|\Rrp - u\|_\D\le \varepsilon$.  Conversely, if\/ $r_+$ is a rational function
analytic in $\D$ such that $\|\Rrp-u\|_\D\le \varepsilon$, then 
there exists a rational function $r$ whose poles are
the poles of\/ $r_+$ and their reflections in $\D$ such that
$\|r-h\|_S\le \varepsilon$.  
\end{thm}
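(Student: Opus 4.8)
The plan is to mimic the proof of Theorem~\ref{thm1}, replacing reflection in the real line by reflection in the unit circle. The Schwarz reflection that is relevant here is the inversion $z \mapsto 1/\overline z$, which maps $\D$ to $\Dm$ and fixes $S$ pointwise. So the first step is to record the correct symmetry: if $r$ is rational and real on $S$, then $r(z) = \overline{r(1/\overline z)}$ for all $z$ where both sides are defined (equivalently $\overline{r(1/\overline z)}$ is a rational function in $z$ agreeing with $r$ on $S$, hence everywhere). In particular a pole of $r$ at $a \in \D$ forces a matching pole at $1/\overline a \in \Dm$, and conversely; a pole at $0$ corresponds to a pole at $\infty$.

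Next I would perform the decomposition. Write $r = (r_+ + r_-)/2$ where $r_+$ collects the partial-fraction terms of $r$ with poles in $\Dm$ (including any pole at $\infty$, i.e.\ the polynomial part, and splitting the constant $r(\infty)$ term—careful: here ``value at infinity'' needs handling since $\infty\in\Dm$) and $r_-$ collects the terms with poles in $\D$ (including $0$). The symmetry above then gives $r_-(z) = \overline{r_+(1/\overline z)}$ for all $z$, and restricting to $z = x \in S$, where $1/\overline x = x$, yields $r(x) = \tfrac12(r_+(x) + \overline{r_+(x)}) = \Re r_+(x)$. Hence $\|\Re r_+ - h\|_S \le \|r-h\|_S \le \varepsilon$. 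Since $r_+$ is analytic in $\overline\D$ (its poles lie in $\Dm$), $\Re r_+$ is harmonic in $\D$ and continuous on $\overline\D$, so the maximum principle gives $\|\Re r_+ - u\|_\D \le \varepsilon$; and by construction the poles of $r_+$ are exactly the poles of $r$ in $\Dm$. The converse direction is the mirror image: given $r_+$ analytic in $\D$ with $\|\Re r_+ - u\|_\D \le \varepsilon$, set $r(z) = \tfrac12\bigl(r_+(z) + \overline{r_+(1/\overline z)}\bigr)$; this is rational, real on $S$ (where it equals $\Re r_+$), has poles at the poles of $r_+$ together with their reflections $1/\overline a$ in $\D$, and satisfies $\|r-h\|_S = \|\Re r_+ - h\|_S \le \varepsilon$ on $S$, which is all that is claimed.

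The one genuinely delicate point—and the step I expect to be the main obstacle—is the bookkeeping at $z=0$ and $z=\infty$ under the inversion, which plays the role that $z=\infty$ alone played in Theorem~\ref{thm1}. Unlike the half-plane case, where $\infty$ is a boundary point fixed by the reflection, here $0$ and $\infty$ are swapped by $z\mapsto 1/\overline z$, and neither lies on $S$. One must check that a constant term of $r$ can be assigned wholly to $r_+$ (a constant is analytic in $\overline\D$ and real, so this is harmless and does not break $r(x)=\Re r_+(x)$ on $S$ since a real constant equals its own real part), that a pole of $r$ at the origin is correctly paired with a pole of $r_+$ at $\infty$ (i.e.\ a polynomial part), and vice versa, and that no spurious analyticity or pole is introduced when forming $\overline{r_+(1/\overline z)}$. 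Equivalently one can sidestep this by transporting Theorem~\ref{thm1} through a Möbius map sending $\D$ to $\Cp$, but then one must argue that rationality and pole count are preserved under that conjugation and that ``bounded harmonic'' translates correctly; either route requires the same careful attention to the points over $0$ and $\infty$. Once that accounting is pinned down, the rest is a verbatim transcription of the proof of Theorem~\ref{thm1}.
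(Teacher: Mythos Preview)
Your proposal is correct and follows exactly the route the paper indicates: the paper's proof is the single sentence ``One can argue as before, or alternatively, derive this as a corollary of Theorem~\ref{thm1} by a M\"obius transformation,'' and you have carried out the first option in detail (with the Schwarz reflection $z\mapsto 1/\overline z$ replacing $z\mapsto\overline z$) and explicitly noted the second. Your flagged concern about the bookkeeping at $0$ and $\infty$ is genuine but minor---in particular the constant should be split just as in Theorem~\ref{thm1} rather than assigned wholly to $r_+$, since you need $r_-(x)=\overline{r_+(x)}$ on $S$---and the paper does not spell this out either.
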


\begin{proof}
One can argue as before, or alternatively, derive this is a corollary of
Theorem~\ref{thm1} by a M\"obius transformation.
\end{proof}

We now turn to conclusion (2), concerning the case where
$\Omega$ has corners.  As mentioned, in the worst case rational
approximation may give ``too many poles,'' meaning poles not needed
for approximation of the solution of the Laplace problem, and it
may give ``not enough poles,'' meaning poles that are inadequate
to approximate the solution of the Laplace problem.  To explain
this, we present a pair of examples in Figure~\ref{lensfig},
both showing poles of AAA approximations with tolerance $10^{-8}$
on the boundary of the bounded symmetric ``lens'' domain $\Omega$
bounded by two circular arcs meeting at right angles at $z=\pm 1$.

The first image illustrates ``too many poles.''  When the
function $h(z) = \Re z$ is approximated by a rational function
on $\pO$, many poles appear both inside and outside $\Omega$;
this will be the rule almost always when a region has corners.
And yet this boundary data can be exactly matched by the harmonic
function $u(z) = \Re z$, which has just a single pole at $\infty$.
So the clusters of poles obtained by AAA are unnecessary for the
Laplace problem in the interior of $\Omega$.

\begin{figure}[t]
\vskip 15pt
\begin{center}
~~~~~\includegraphics[scale=.93]{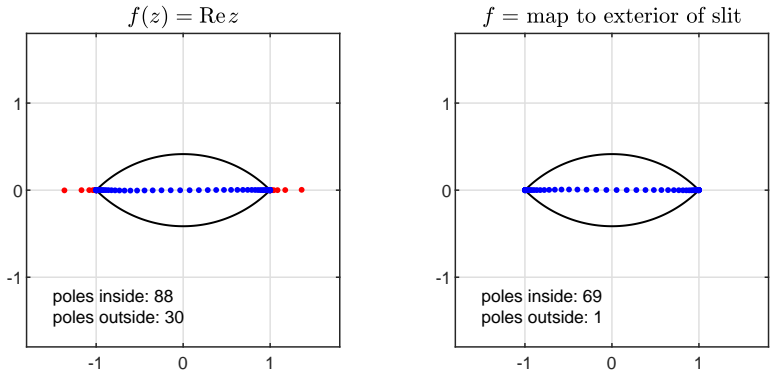}
\caption{\small\label{lensfig}Examples showing that in the worst case,
the pole symmetry principle underlying
the global AAA-LS method may fail.  On the left, AAA approximation gives
``too many poles,'' with poles exponentially clustered outside $\Omega$ near
$\pm 1$ even though the singularity-free function $u(z) = \Re z$ solves the Laplace problem.
On the right, it gives ``too few poles,'' providing no poles
at all outside $\Omega$ near the boundary even though the rational approximation of the
solution of the Laplace problem will need them to approximate the branch
point singularities at $\pm 1$.  Both these situations are
nongeneric and unlikely to appear in practice.}
\end{center}
\end{figure}

The second image illustrates ``too few poles.''  Here $h$ is
taken as the values on $\pO$ of the
analytic function $f$ that maps the exterior of $\Omega$
conformally to the exterior of the slit $[-1,1]$ while leaving
the points $\pm 1$ and $\infty$ fixed:
\begin{equation}
f(z) = {1+v^2\over 1-v^2}, \quad v = -\left( {z-1\over z+1}\right)^{2/3}.
\end{equation}
With the standard branch of the $2/3$ power, $f$ has a branch
cut along $[-1,1]$, and AAA finds a rational approximation $r$
whose poles lie approximately on this slit.  In particular,
they all lie within $\Omega$ apart from one pole of magnitude
$10^{10}$, approximating the pair $f(\infty) = \infty$.  Thus there
are no poles near $\pO$ for the AAA-LS method to work with in
approximating the solution in the interior of $\Omega$, yet this
solution has singularities at $\pm 1$ involving fractional powers
$(z\pm 1)^{4/3}$, so it would need such poles to get high accuracy.

Thus we see that on domains with corners, failure of the pole
symmetry principle is possible.  However, the failures we
have identified are atypical, at least in these extreme forms.
The example on the left in Figure~\ref{lensfig} is special in that
despite the corners in the domain, the solution to the Laplace
problem has no singularities thanks to special boundary data.
This is hardly the generic situation (though picking such examples
is a common mistake beginners make when testing their Laplace
codes!).  As for the example on the right, it has the unusual
property of involving data $h$ that can be analytically continued
to all of $\C\cup\{\infty\}\backslash \overline{\Omega}$.  This is
another very special situation.  Generically, a function $h$ on a
domain boundary with corners will only be analytically continuable
with branch cuts on both sides, and rational approximations will
need to have poles approximating those branch cuts on both sides
of the domain.  Configurations like that of the second image of
Figure~\ref{lensfig} are unlikely to appear in applications.

Now we turn to conclusion (3).  Suppose $\Omega$ is a
simply-connected domain bounded by an analytic curve that is not
simply a circle or straight line.  For such a problem, Schwarz
reflection no longer gives a symmetry equivalence between $\Omega$
and $\C\cup \{\infty\}\backslash \overline{\Omega}$.  What happens
to the pole symmetry principle?

The ``pure mathematics answer'' is that everything works
essentially as before, modified only by the need for a fast
exponentially-convergent polynomial term to be added into
the rational approximations.  The reasoning here can be
based on the technique of considering a conformal map $w =
\phi(z)$ of $\C\cup\{\infty\}\backslash \overline{\Omega}$ to
$\C\cup\{\infty\}\backslash \overline{\Delta}$ with $\phi(\infty) =
\infty$ and its inverse map $z = \psi(w)$~\cite{gaier}.  If $\pO$
is analytic, then $\phi$ and $\psi$ extend analytically to larger
domains, implying that they can be approximated by polynomials in
$z^{-1}$ and $w^{-1}$, respectively, with exponential convergence.
It follows that rational approximation of a function $h$ defined on
$\pO$, for example, is equivalent to rational approximation of its
transplant $\tilde h(w) = h(\psi(w))$ on $S$, up to exponentially
convergent polynomial terms.  If $h$ has singularities, then
root-exponential convergence of rational approximations in $z$
is ensured by the same property for rational approximation of
$\tilde h$ in $w$.  By this kind of reasoning one can argue
that AAA-LS in a smooth domain is like AAA-LS in a disk, up to
constants associated with polynomial approximations.

The ``applied mathematics answer'' is not so simple.  All across
complex analysis, the constants that appear in estimates of
interest tend to grow exponentially as functions of geometric
parameters such as the aspect ratios of reentrant or salient
fingers in boundary curves, and this applies here.  So the
practical status of the pole symmetry principle for regions with
curved boundaries may not be so different from that for regions
with corners.

All the discussion above pertains to the global variant of AAA-LS.\
\ For local variants, as illustrated in the discussion around
the multiply-connected domain of Figure~\ref{circleL}, failures
of the algorithm are more likely to appear in practice if the
AAA step of the algorithm is applied with the data $h$ given.
In such cases, we recommend the method used in that figure:
replace the actual boundary data $h$ by a function $\hat h$
targeted to generate singularities at each corner, such as the
product of the square roots of the distances to the corners.
Our experience shows that as a practical matter, this strategy
is highly effective.  The reason for this is that, though
not all singularities look alike, a wide range of them can be
approximated with root-exponential convergence by exponentially
clustered poles, whose configurations need not be tuned to the
singularities~\cite{lightning,clustering}.  So the set of poles
utilized by AAA to approximate one function will generally also
do well for another.

In the case of a multiply connected domain, to turn to point
(4) of our summary, one should always use a local variant of
the AAA-LS method.  The reason is that approximating harmonic
functions in such a domain will require logarithmic terms since
their conjugates are in general multi-valued~\cite{axler}.  One can
use AAA to approximate a real function $h$ on the boundary $\pO$
of such a domain by a rational function $r$, but $r$ will not have
the right properties interior to $\Omega$.  As illustrated in
Figure~\ref{twoholes}, typically it will approximate different
analytic functions near the different boundary components,
separated by strings of poles approximating branch cuts (compare
Fig.~6.9 of~\cite{AAA}).  These poles have nothing to do with the
harmonic function $u$ in $\Omega$ one wants to approximate, so
in such a case global rational approximations should not be used.

\begin{figure}[h]
\begin{center}
\vspace{10pt}
\includegraphics[scale=.83]{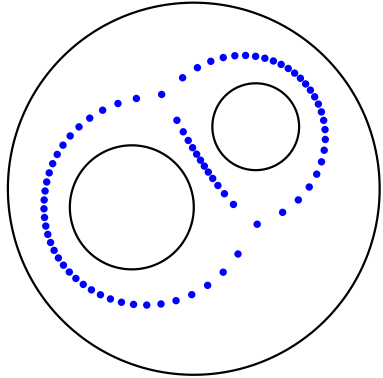}
\caption{\small\label{twoholes}Poles of a global AAA rational approximant $r$ with tolerance
$10^{-8}$ on the boundary of a triply-connected domain with boundary data $0$,
$1$, and $2$ on the smaller, larger, and outer circles, respectively.   The function
$r$ matches the data accurately on all three
parts of $\pO$, but achieves this only by introducing strings of poles
that effectively split $\Omega$ into subdomains with separate analytic functions.  Here,
these are the constant functions $0$, $1$, and $2$, though the configuration would be much
the same for any analytic boundary data.
Effective approximation by a single
harmonic function throughout $\Omega$ would require an additional logarithmic term in each hole,
so for Laplace problems in
domains like this, a local rather than global variant of AAA-LS should be used.}
\end{center}
\end{figure}

In discussing local rational approximations above, we alluded
to a kind of approximate university of pole distributions for
resolving singularities.  This suggests that in the end, AAA
approximation should not really be necessary; one could equally
well use a ``lightning'' strategy in which poles are positioned
a priori rather than determined from the data.  Indeed we think this is
likely to be the case for problems dominated by singular corners,
though the great convenience of starting from AAA approximations
remains an advantage.  For problems less controlled by corners,
global or partially-global variants of AAA-LS will have a power
not easily matched by lightning solvers.

\section{Discussion}
AAA-LS offers a remarkably fast and accurate way to solve Laplace
problems in planar domains with corners.  Typical examples give
8-digit accuracy in a fraction of a second, and the resulting
representation of the solution as the real part of a rational
function can be evaluated in microseconds per point.  Not just the
harmonic function but also its harmonic conjugate are obtained,
thereby giving the analytic extension of the solution in the
problem domain as well as the solution itself---the Hilbert
transform or Dirichlet-to-Neumann map.  For domains with holes,
this analytic extension is a multivalued analytic function, which
consists of a single-valued function plus multivalued log terms,
one for each hole~\cite{axler}.

A feature of all these expansion-based methods is that the
representations of the solution they compute are numerically
nonunique and, a fortiori, non-optimal.  The matrices involved
have enormous or infinite condition numbers, and the coefficient
vectors they deliver may depend in unpredictable ways on details
of boundary discretization and other parameters.  If we solve a
Laplace problem and obtain 8-digit accuracy with 112 poles, for
example, it must not be supposed that these poles are in truly
optimal locations or that 112 is the precise minimal number
for this accuracy.  Despite that, the 8 digits are solid, as
can be verified a posteriori by applying the maximum principle
on a finer boundary grid, and they are achieved thanks to the
regularizing effects of least-squares solvers as realized in the
MATLAB backslash command.

Some other methods for computing rational approximations, such as
vector fitting~\cite{vf}, IRKA~\cite{irka}, RKFIT~\cite{rkfit},
IRF~\cite{hyl}, AGH~\cite{agh}, and the Haut--Beylkin--Monz\'on
reduction algorithm~\cite{hbm}, have optimality as a more central
part of their design concept than AAA-LS, though they too will
often terminate before optimality is achieved.  As a rule, one
can not count on achieving optimality in rational approximation
problems, in view of their extreme sensitivities, which are
reflected both theoretically and computationally in longstanding
complications of spurious poles or ``Froissart doublets''.
For example, it is well known that Pad\'e approximants, which
are defined by optimality in approximating a function and its
derivatives at a single point, do not in general converge to the
function being approximated~\cite{bgm,pade}.

Continuing on the matter of optimality in rational approximation,
we offer an analogy from the field of matrix iterations for
large linear systems of equations $Ax=b$, the core problem of
computational science.  (Actually it is more than an analogy,
since matrix iterations are closely connected with rational
approximations.)  In theory, one might seek to generate
an approximation to the solution vector $x$ at each step of
iteration that was truly optimal by some criterion.  In a sense
this is what certain forms of pure Lanczos or biconjugate gradient
iterations do.  However, it is well known that such an attempt
brings risks of breakdowns and near-breakdowns that interfere
with performance~\cite{fgn}.  In practice, iterative methods
aim for speed rather than optimality, and the idea of trying to
solve $Ax=b$ to a certain accuracy in exactly the minimal number
of steps is not part of the discussion.

In the past few years about a dozen papers have appeared related
to AAA and lightning solution of Laplace problems via rational
approximation and its variants; an impressive example we have
not mentioned is~\cite{baddoo}, and an important earlier work
is~\cite{hyl}.  Most of the methods proposed in these works
approximate continuous boundaries by discrete sets, typically with
thousands of clustered points, and it is an interesting question
to what extent such discretization is necessary.  Even if the
least-squares problem ultimately solved will involve a matrix
with discrete rows, one may wonder whether the discretization
can be deferred or hidden away in ``continuous-mode'' AAA or
AAA-LS methods, as is done by the \hbox{MATLAB} code {\tt
laplace}~\cite{lightningcode} and in Chebfun codes
such as {\tt minimax}.\ \ This is one of many areas in which
AAA and lightning methods, which are very young, can be expected
to improve with further investigation in the years ahead.  We are
also exploring speedups to the linear algebra, and the possibility
of ``log-lightning'' AAA-LS approximation as in~\cite{loglight}.

\section*{Appendix: sample code}
As templates for further explorations, Figures~\ref{code}
and~\ref{codeVA} list the MATLAB codes used to generate the second
row of Figure~\ref{fig1}.

\begin{figure}[h]
{\scriptsize   
\verbatiminput{fig1b.m}
\par\vspace{-10pt}}
\caption{\small\label{code}MATLAB code to generate the second row of Figure~\ref{fig1}.}
\end{figure}

\begin{figure}[t]
{\scriptsize   
\verbatiminput{VAorthog.m}
\verbatiminput{VAeval.m}
\par}
\caption{\small\label{codeVA}Codes for Vandermonde with Arnoldi orthogonalization and evaluation,
from~\cite{stokes}.}
\end{figure}

\newpage

\begin{acknowledgments}
We are grateful for advice and collaboration over the past
few years from Peter Baddoo,
Pablo Brubeck, Abi Gopal, Yuji Nakatsukasa, Kirill Serkh,
Andr\'e Weideman, and Heather Wilber.
\end{acknowledgments}

\par

{
\small

\par}


\begin{thebibliography}{99}

\bibitem{agh}
B. Alpert, L. Greengard, T. Hagstom, Rapid evaluation of nonreflecting
boundary kernels for time-domain wave propagation.
SIAM J. Numer.\ Anal.\ {\bf 37}, 1138--1164 (2000).

\bibitem{axler}
S. Axler, Harmonic functions from a complex analysis viewpoint.
{\em Amer.\ Math.\ Monthly} {\bf 93}, 246--258 (1986).

\bibitem{baddoo}
P. J. Baddoo, Lightning solvers for potential flows.
Fluids {\bf 5}, 227 (2020).

\bibitem{bgm}
G. A. Baker, Jr. and P. Graves-Morris, {\em Pad\'e Approximants.}
2nd ed., Cambridge (1996).

\bibitem{rkfit}
M. Berljafa and S. G\"uttel, The RKFIT algorithm for
nonlinear rational approximation.
{\em SIAM J. Sci.\ Comp.} {\bf 39}, A2049--A2071 (2017).

\bibitem{VA}
P. D. Brubeck, Y. Nakatsukasa, L. N. Trefethen,
Vandermonde with Arnoldi.  {\em SIAM Rev.,} to appear.

\bibitem{stokes}
P. D. Brubeck, L. N. Trefethen, Lightning Stokes solver.
{\em SIAM J. Sci.\ Comput.,} submitted (2021).

\bibitem{costa}
S. Costa, Solving Laplace problems with the AAA algorithm.
arXiv:2001.09439v1 (2020).

\bibitem{coscos}
S. Costa, E. Costamagna, An alternative
method for field analysis in inhomogeneous domains.
{\em COMPEL}, to appear (2021).

\bibitem{chebfun}
T. A. Driscoll, N. Hale, L. N. Trefethen,
{\em Chebfun Guide.} Pafnuty Press, Oxford, 2014;
see also {\tt www.chebfun.org}.

\bibitem{fgn}
R. W. Freund, M. H. Gutknecht, N. M. Nachtigal,
An implementation of the look-ahead Lanczos algorithm
for non-Hermitian matrices.
{\em SIAM J. Sci.\ Comp.} {\bf 14}, 137--158 (1993).

\bibitem{gaier}
D. Gaier, {\em Lectures on Complex Approximation.}
Birkh\"auser, 1985.

\bibitem{pade}
P. Gonnet, S. G\"uttel, L. N. Trefethen, Robust
Pad\'e approximation via SVD.\ \ {\em SIAM Rev.}
{\bf 55}, 101--117 (2013).

\bibitem{helmholtz}
A. Gopal, L. N. Trefethen,
New Laplace and Helmholtz solvers.
{\em Proc.\ Natl.\ Acad.\ Sci.} {\bf 116}, 10223--10225 (2019).

\bibitem{lightning}
A. Gopal, L. N. Trefethen,
Solving Laplace problems with corner singularities
via rational functions.
{\em SIAM J. Numer.\ Anal.} {\bf 57}, 2074--2094 (2019).

\bibitem{nm}
A. Gopal, L. N. Trefethen,
Representation of conformal maps by rational functions.
{\em Numer.\ Math.} {\bf 142}, 359--382 (2019).

\bibitem{irka}
S. Gugercin, A. Antoulas, C. Beattie, ${\cal H}_2$ model reduction
for large-scale linear dynamical systems, {\em SIAM J. Matrix Anal.\ Applics.} {\bf 30},
609--638.

\bibitem{vf}
B. Gustavsen, A. Semlyen, Rational approximation of frequency
domain responses by vector fitting.
{\em IEEE Trans.\ Power Deliv.} {\bf 14}, 1052--1061 (1999).

\bibitem{hbm}
T. Haut, G. Beylkin, L. Monz\'on, Solving
Burgers' equation using optimal rational approximations.
{\em Appl.\ Comp.\ Harm.\ Anal.} {\bf 34}, 83--95 (2013).

\bibitem{henrici}
P. Henrici, {\em Applied and Computational Complex Analysis,} v.~3.
Wiley-Interscience, New York (1986).

\bibitem{hyl}
A. Hochman, Y. Leviatan, J. K. White, 
On the use of rational-function fitting methods for the
solution of 2D Laplace boundary-value problems.
{\em J. Comput.\ Phys.} {\bf 238}, 337--358 (2013).

\bibitem{mqmc}
Y. Mo, T. Qian, W. Mai, Q. Chen,
The AFD methods to compute Hilbert transform.
{\em Appl.\ Math.\ Lett.} {\bf 45}, 18--24 (2015).

\bibitem{AAA}
Y. Nakatsukasa, O. S\`ete, L. N. Trefethen,
The AAA algorithm for rational approximation.
{\em SIAM J. Sci.\ Comp.} {\bf 40}, A1494--A1522 (2018).

\bibitem{lawson}
Y. Nakatsukasa, L. N. Trefethen,
An algorithm for real and complex rational minimax approximation.
{\em SIAM J. Sci.\ Comp.} {\bf 42}, A3157--A3179 (2020).

\bibitem{loglight}
Y. Nakatsukasa, L. N. Trefethen,
Reciprocal-log approximation and planar PDE solvers.
{\em SIAM J. Numer.\ Anal.}, submitted (2020).

\bibitem {newman64}
D. J. Newman, Rational approximation to $|x|$.
{\em Michigan Math. J.} {\bf 11}, 11--14 (1964).

\bibitem{protasov}
Y. Y. Protasov, Approximation by simple partial fractions
and the Hilbert transform.  {\em Izvestiya: Math.}
{\bf 73}, 333--349 (2009).

\bibitem{series}
L. N. Trefethen, Series solution of Laplace problems.
{\em ANZIAM J.} {\bf 60}, 1--26 (2018).

\bibitem{digest}
L. N. Trefethen, 8-digit Laplace
solutions on polygons?  Posting on NA Digest
at {\tt http://www.netlib.org/na-digest-html}, November 29, 2018.

\bibitem{lightningcode}
L. N. Trefethen,
Lightning Laplace code {\tt laplace.m}.
{\tt people.maths.ox.ac.uk/ trefethen/laplace/} (2020).

\bibitem{conf}
L. N. Trefethen, Numerical conformal mapping with rational functions.
{\em Comp.\ Meth.\ Funct.\ Th.} {\bf 20}, 369--387 (2020).

\bibitem{clustering}
L. N. Trefethen, Y. Nakatsukasa, J. A. C. Weideman,
Exponential node clustering at singularities for
rational approximation, quadrature, and
PDE\kern .3pt s.\ \ {\em Numer.\ Math.} {\bf 147}, 227--254 (2021).

\bibitem{weideman}
J. A. C. Weideman, Computing the Hilbert transform on the real line.
{\em Math.\ Comp.} {\bf 64}, 745--762 (1995).

\bibitem{wilber}
H. Wilber, A. Damle, A. Townsend, Data-driven algorithms
for signal processing with rational functions.
arXiv:2105.07324v1 (2021).

\end{thebibliography}
\end{document}